\newcommand{\etal}{\textit{et al}.~}
\newcommand{\ie}{\textit{i}.\textit{e}.}
\newcommand{\eg}{\textit{e}.\textit{g}.}
\newtheorem{definition}{Definition}
\newtheorem{remark}{Remark}
\newtheorem{lemma}{Lemma}
\newtheorem{theorem}{Theorem}
\newtheorem{example}{Example}
\title{\LARGE \bf
	A Graph-based Conflict-free Cooperation Method \\for Intelligent Electric Vehicles at Unsignalized Intersections
}
\author{Chaoyi Chen$^{1}$, Qing Xu$^{1}$, Mengchi Cai$^{1}$, Jiawei Wang$^{1}$, Biao Xu$^{2}$, \\ Xiangbin Wu$^{3}$, Jianqiang Wang$^{1}$, Keqiang Li*$^{1}$ and Chunyu Qi$^{4}$.
	\thanks{*This work was supported by the National Key Research and Development Program of China under Grant 2019YFB1600804, the National Natural Science Foundation of China under Grant 52072212, and Intel Collaborative Research Institute Intelligent and Automated Connected Vehicles.}
	\thanks{$^{1}$Chaoyi Chen, Qing Xu, Mengchi Cai, Jianqiang Wang and Keqiang Li are with School of Vehicle and Mobility, Tsinghua University, Beijing 100084, P.R.China}
	\thanks{$^{2}$Biao Xu is with College of Mechanical and Vehicle Engineering, Hunan University}
	\thanks{$^{3}$Xiangbin Wu is with Intel Lab China}
	\thanks{$^{4}$Chunyu Qi is with Beijing BITNEI Corp., Ltd}
	\thanks{Corresponding author: Keqiang Li, Email address:likq@tsinghua.edu.cn}
}
\begin{document}
\maketitle
\thispagestyle{empty}
\pagestyle{empty}

\begin{abstract}
Electric, intelligent, and network are the most important future development directions of automobiles. Intelligent electric vehicles have shown great potentials to improve traffic mobility and reduce emissions, especially at unsignalized intersections. Previous research has shown that vehicle passing order is the key factor in traffic mobility improvement. In this paper, we propose a graph-based cooperation method to formalize the conflict-free scheduling problem at unsignalized intersections. Firstly, conflict directed graphs and coexisting undirected graphs are built to describe the conflict relationship of the vehicles. Then, two graph-based methods are introduced to solve the vehicle passing order. One method is an optimized depth-first spanning tree method which aims to find the local optimal passing order for each vehicle. The other method is a maximum matching algorithm that solves the global optimal problem. The computational complexity of both methods is also derived. Numerical simulation results demonstrate the effectiveness of the proposed algorithms.
\end{abstract}

\section{INTRODUCTION}
Intersections are the most common merging points in urban traffic scenarios~\cite{xu2021coordinated}. According to Federal Highway Administration, each year more than 2.8 million intersection-related crashes occur in the U.S., which hold 44 percent of all the crashes~\cite{azimi2014stip}. Through V2X technology, the centralized controller at the intersection coordinates the intelligent electric vehicles to pass the intersection, which guarantees conflict-free vehicle cooperation. On the other hand, electric vehicles have a natural sort of advantage in vehicles' automated control since their power-train is highly electrified~\cite{chan1997overview}. Thus, electric vehicles have great potentials in improving traffic safety and mobility at intersections~\cite{zheng2020smoothing, chen2020mixed}.


Considering the fully-autonomous scenario, the traffic light becomes unnecessary when all the vehicles share their driving information in real time. In perspective of single vehicle control, multiple methods have been proposed to optimize the vehicle speed trajectory,~\eg, fuzzy logic~\cite{Milanes10}, Model Predictive Control (MPC)~\cite{Zheng17} or optimal control~\cite{malikopoulos2018decentralized}.


In terms of a large scale of vehicles, researchers have pointed out that the vehicles' passing order is the key factor that influences the traffic mobility~\cite{li2006cooperative}. As vehicles approaching the intersection, the optimal passing order changes dynamically and constantly. One straightforward solution is First-in-First-out (FIFO) strategy~\cite{malikopoulos2018decentralized}, which means firstly-entered vehicles are scheduled to leave the intersection first. However, the FIFO passing order is not likely to be the optimal solution in most cases. To tackle this problem, various methods have been proposed to find the optimal passing order,~\eg, spanning tree~\cite{li2006cooperative}, Monte Carlo tree search~\cite{xu2019cooperative} and dynamic programming~\cite{yan2009autonomous}. Since optimal passing order is a discrete rather than a continuous problem, geometry topology is also introduced in system modeling~\cite{lin2019graph}. However, the above-mentioned research mainly focus on specific methods to obtain the passing order and only a few of them analyze the optimality and computational complexity. Some research have focus on the reduction of the optimal passing order problem~\cite{ahn2019abstraction, miculescu2019polling}, but they mainly focus on the feasibility of a conflict-free solution. In practice, however, the traffic efficiency is another critical topic that has not been fully considered in intersection coordination.


Generally speaking, the majority of existing works focus on obtaining the passing order through a specific method without considering the optimality and computational complexity. Some research focuses on the feasibility of a conflict-free solution, but traffic efficiency is neglected. Therefore, a concise and rigid solution of optimal passing order which considers both traffic safety and mobility has yet not been developed. In this paper, we propose a graph-based conflict-free cooperation method. Based on that, two novel methods are introduced to obtain the local or global optimal vehicle passing order.

Specifically, our contributions are:

\begin{enumerate}
	\item An optimized depth-first spanning tree method is proposed based on~\cite{xu2018distributed}. After separating different conflict types by Conflict Directed Graph (CDG), the overall depth of the spanning tree,~\ie, evacuation time of the vehicles, is further reduced. The method has low computational complexity and the passing order solution it generates is proved to be local optimal.
	\item From graphic analysis, Coexisting Undirected Graph (CUG) is introduced to describe vehicles which could pass the intersection simultaneously. A maximum matching method is further applied to obtain the spanning tree, which is proved to be the global optimal passing order solution.
\end{enumerate}

The rest of this paper is organized as follows. Section~\ref{Sec:Scenario} introduces the scenario setup and conflict analysis. Section~\ref{Sec:METHODOLOGY} presents the optimized depth-first spanning tree method and the maximum matching method. Simulation results are in Section~\ref{Sec:Simulation} and Section~\ref{Sec:Conclusion} concludes the paper.

\section{SCENARIO SETUP}
\label{Sec:Scenario}

\label{sec:ConflictAnalysis}
In this paper, we consider a three-lane intersection as shown in Fig.~\ref{fig:Intersection}, where the incoming vehicles are separated into different lanes referring to their destination. Similar to existing research\cite{xu2019cooperative, malikopoulos2018decentralized, xu2018distributed}, lane change behavior is not permitted to guarantee vehicle safety and improve traffic efficiency inside the control zone $ R_c $, as shown as the blue dashed line in Fig.~\ref{fig:Intersection}.

\begin{figure}[tb]
	\centering
	\includegraphics[width=\linewidth]{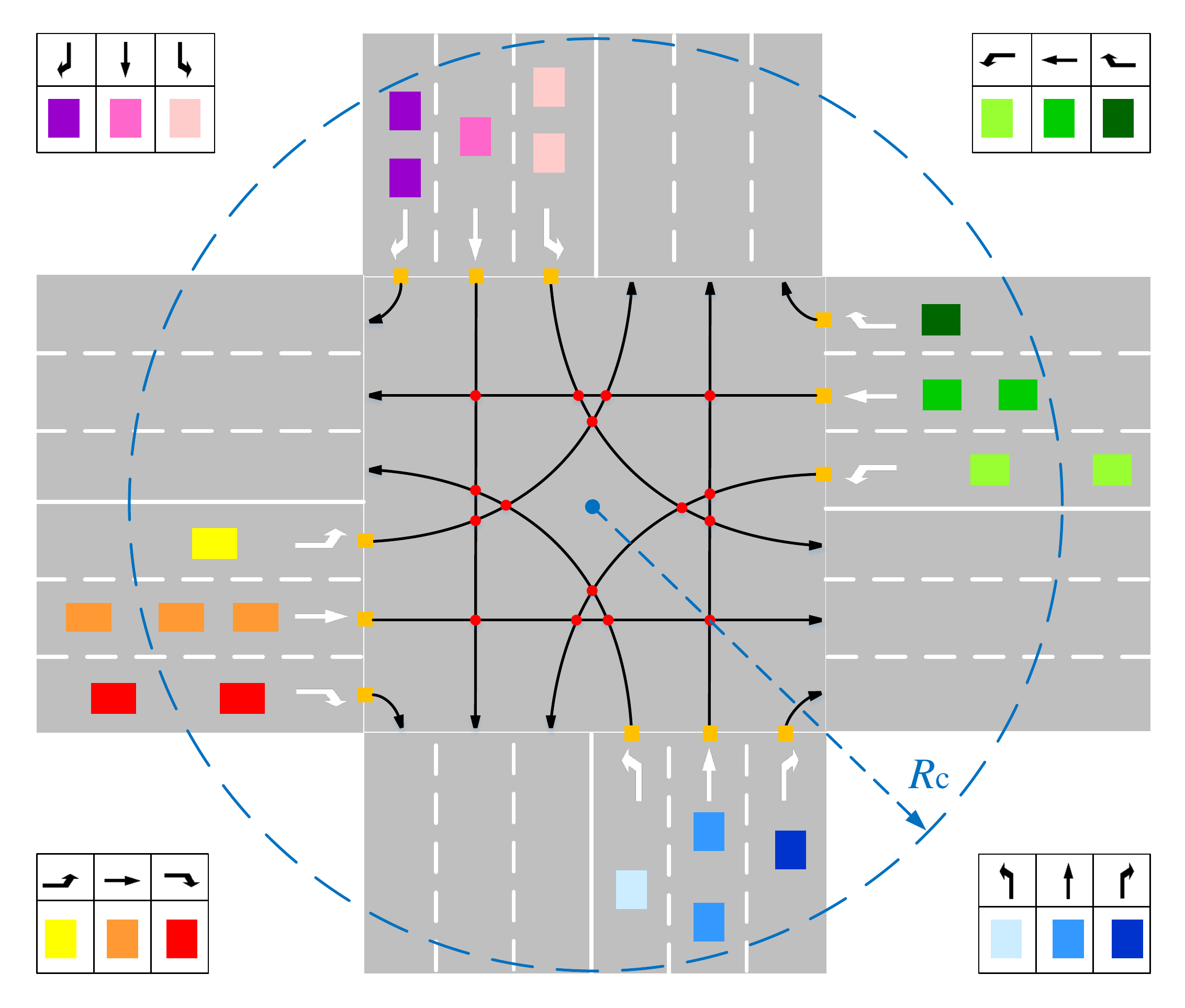}
	\caption{Illustration of the traffic scenario.}
	\label{fig:Intersection}
\end{figure}

A typical intersection has four conflict modes, including crossing, converging, diverging, and no conflict~\cite{xu2018distributed}. 16 red circles in Fig.~\ref{fig:Intersection} represent crossing conflict points, where the vehicles have potentials to collide as shown in Fig.~\ref{fig:ConflictCrossing}. Then, 12 orange squares represent diverging conflict points, where two vehicles come from the same lane as shown in Fig.~\ref{fig:ConflictDiverging}. Converging conflict means the vehicles are going into the same lane, which doesn't exist in our scenario.

\begin{figure}[tb]
	\centering
	\subcaptionbox{Crossing Conflict\label{fig:ConflictCrossing}}
	{\includegraphics[width=0.4\linewidth]{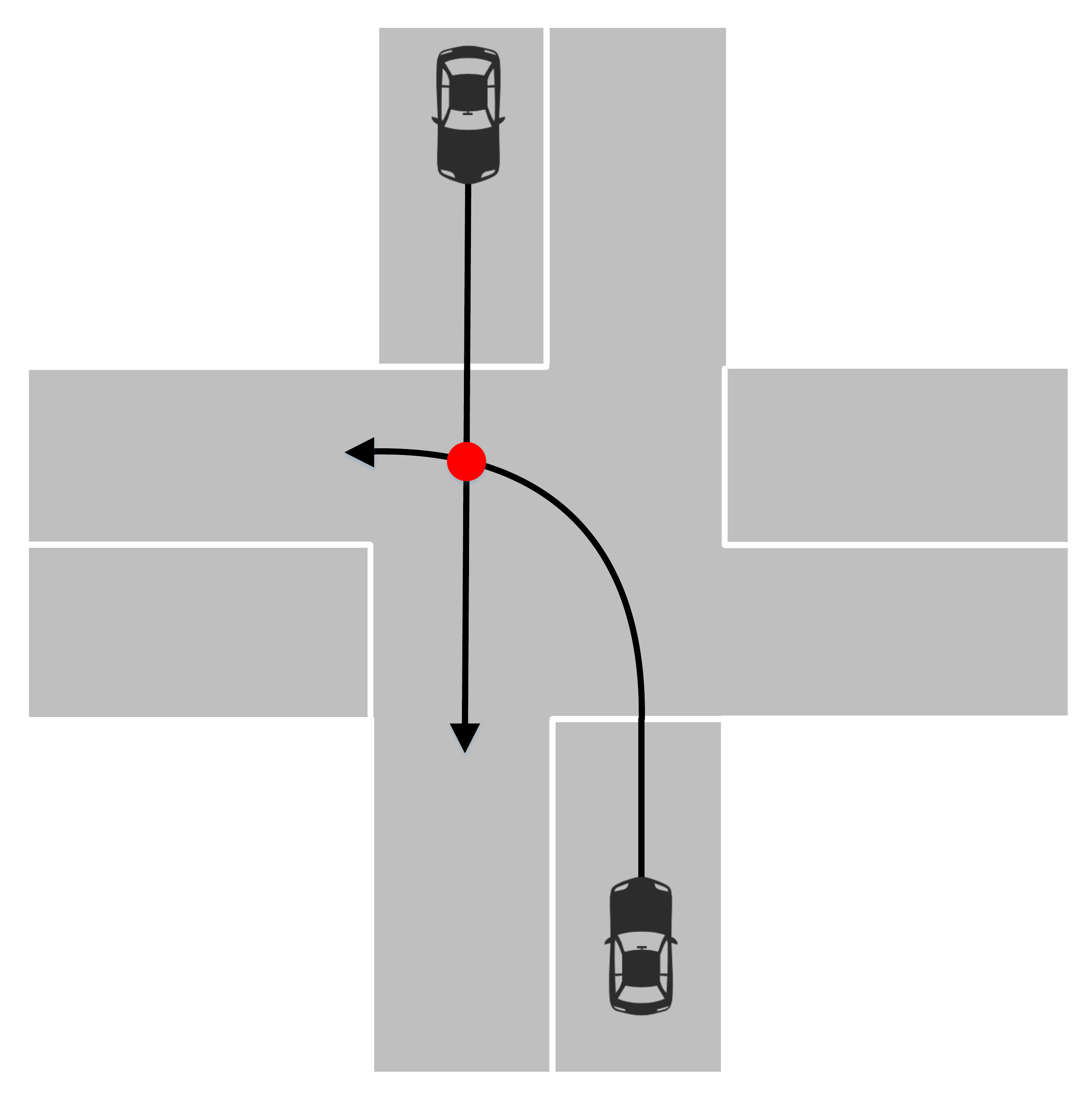}}
	\subcaptionbox{Diverging Conflict\label{fig:ConflictDiverging}}
	{\includegraphics[width=0.4\linewidth]{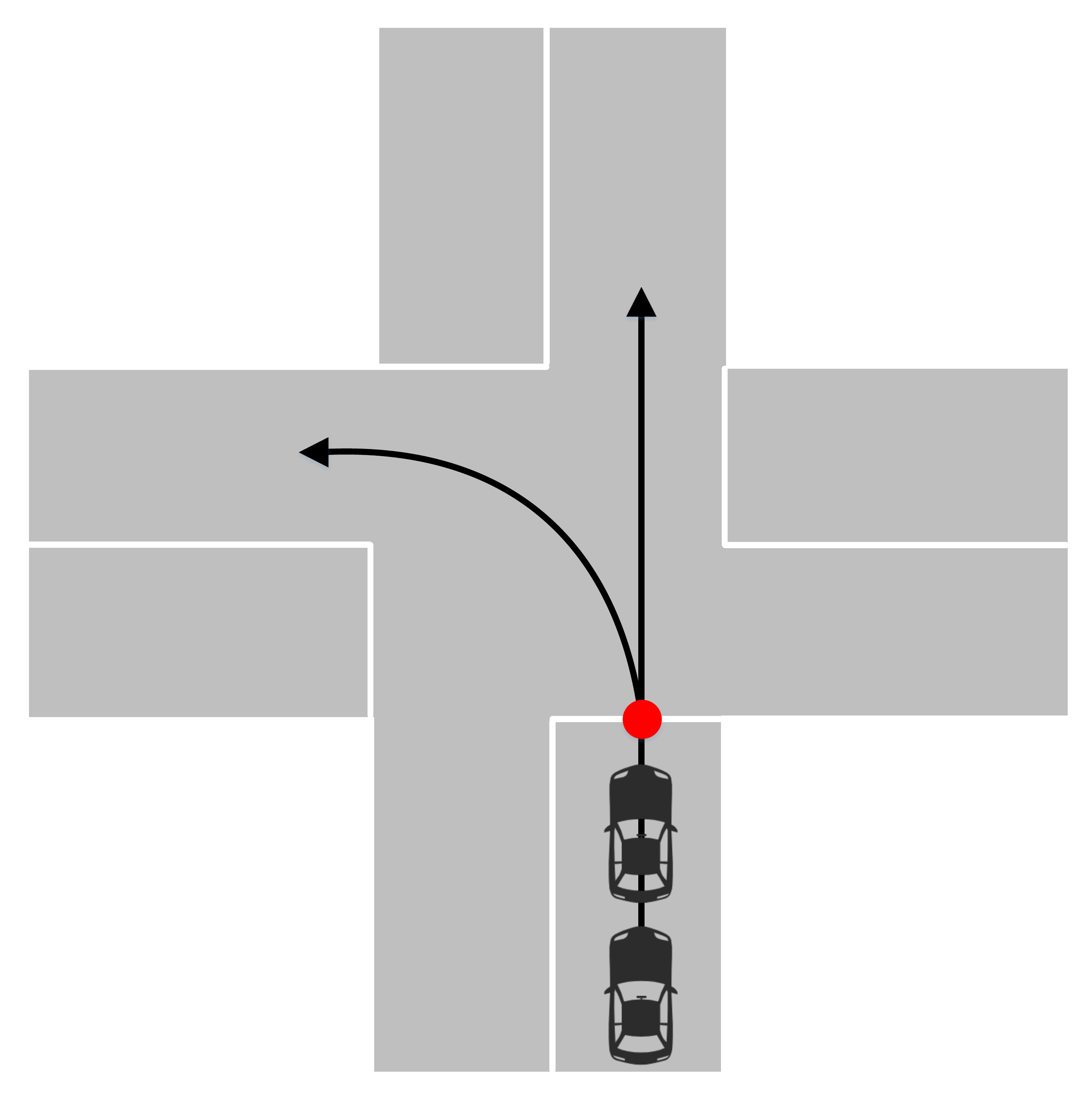}}
	\caption{Different conflict modes.}
	\label{fig:ConflictRelationship}
\end{figure}

The incoming vehicles are indexed from $ 1 $ to $ N $ according to their arriving sequence at the control zone $ R_c $. We define different conflict sets to describe the conflict relationship of the vehicles. For each vehicle $ i \left(i \leq N, i \in \mathbb{N}^+\right) $, its crossing set is defined as $ \mathcal{C}_{i} $, and the diverging set as $ \mathcal{D}_{i} $. Note that because the conflict sets are determined when the vehicle reaches the control zone, vehicle indexes in the conflict sets are smaller than that of the ones on the control zone border. 

\begin{example}
	\label{exp:1}
	Considering an example case as shown in Fig.~\ref{fig:ScenarioSimplify}. Vehicle $ 1 $ and $ 2 $ are coming from the east direction on the middle lane and left lane respectively. Vehicle $ 3 $ and $ 4 $ are approaching on the middle lane from south and west. Vehicle $ 5 $ and $ 6 $ are both on the middle lane from the north.  
\end{example}

In this example, $ \mathcal{C}_{1} = \varnothing $, $ \mathcal{D}_{1} = \varnothing $, $ \mathcal{C}_{2} = \varnothing $, $ \mathcal{D}_{2} = \varnothing $, $ \mathcal{C}_{3} = \{1,2\} $, $ \mathcal{D}_{3} = \varnothing $, $ \mathcal{C}_{4} = \{2,3\} $, $ \mathcal{D}_{4} = \varnothing $, $ \mathcal{C}_{5} = \{1,4\} $, $ \mathcal{D}_{5} = \varnothing $, $ \mathcal{C}_{6} = \{1,4\} $, $ \mathcal{D}_{6} = \{5\} $.

\begin{figure}[tb]
	\centering
	\subcaptionbox{Scenario Case\label{fig:ScenarioSimplify}}
	{\includegraphics[width=0.8\linewidth]{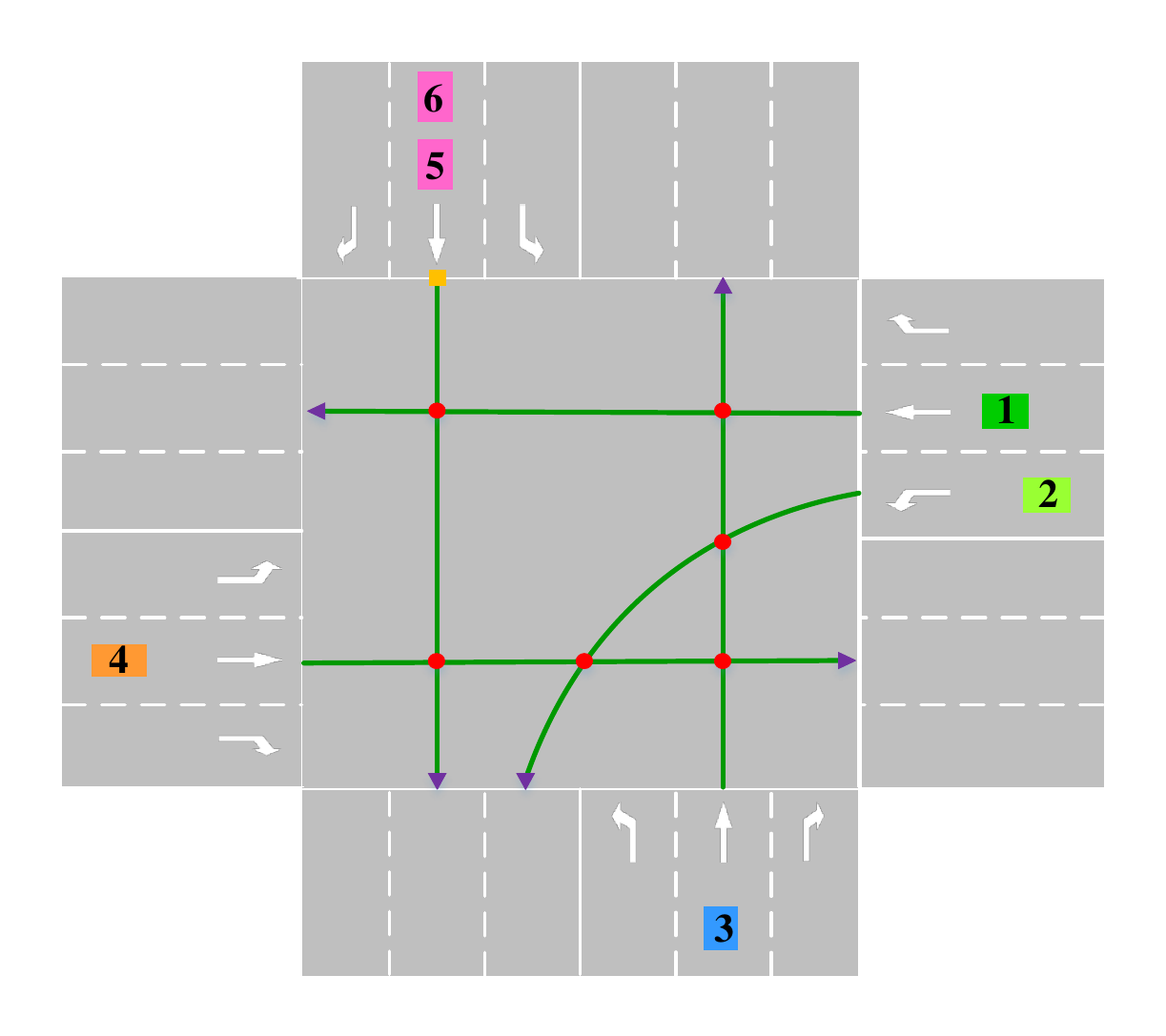}}
	
	\subcaptionbox{Conflict Directed Graph\label{fig:ConflictDirectedGraph}}
	{\includegraphics[width=0.47\linewidth]{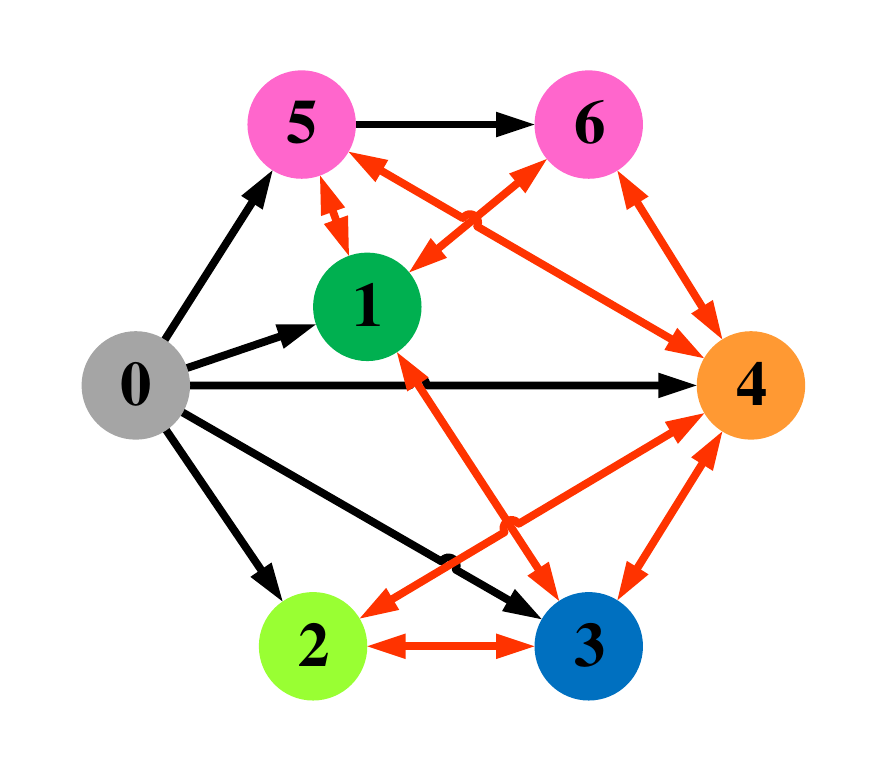}}
	\subcaptionbox{Coexisting Undirected Graph\label{fig:CoexistUndirectedGraph}}
	{\includegraphics[width=0.48\linewidth]{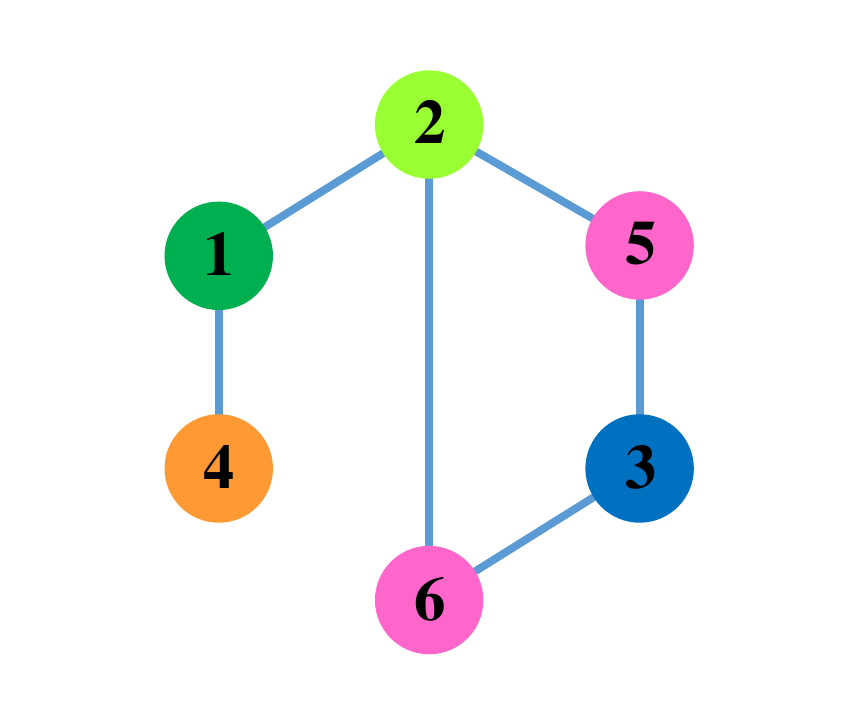}}
	
	\caption{A typical intersection scenario and conflict analysis.}
	\label{fig:ConflictDirectedScenario}
\end{figure}

\section{METHODOLOGY}
\label{Sec:METHODOLOGY}
In~\cite{xu2018distributed}, Xu~\etal propose a method of projecting vehicles from different lanes into a virtual lane, which forms a virtual platoon. In this way, the vehicles from different lanes can drive through the intersection as if they were in the same lane to improve driving smoothness. A Depth-first Spanning Tree (DFST) method is then introduced to find the vehicle passing order. In Section~\ref{Sec:OPT-DFST}, we optimize the DFST method to obtain a local optimal passing order solution, which we name as Optimized Depth-first Spanning Tree (OPT-DFST) method. Then in Section~\ref{Sec:MM}, we introduce the maximum matching (MM) method to find the global optimal passing order solution.

\subsection{Optimized Depth-first Spanning Tree Method}
\label{Sec:OPT-DFST}
In the virtual platoon, there exists a virtual leading vehicle $ 0 $ with the constant moving velocity in front of vehicles that are closest to the intersection. In this way, the leading vehicles in the real world maintain a typical platooning behavior. Accordingly, if the vehicle is the closest to the intersection on each lane, vehicle $ 0 $ is added into its diverging set,~\eg, in Example~\ref{exp:1}, $ \mathcal{D}_{1} = \{0\} $, $ \mathcal{D}_{2} = \{0\} $, $ \mathcal{D}_{3} = \{0\} $, $ \mathcal{D}_{4} = \{0\} $, $ \mathcal{D}_{5} = \{0\} $.

Based on the conflict sets analysis in Section~\ref{sec:ConflictAnalysis}, we further define a Conflict Directed Graph (CDG) $ \mathcal{G}_{N+1} $ to represent the conflict relationship.

\begin{definition}[Conflict Directed Graph]
	The conflict directed graph is denoted as $ \mathcal{G}_{N+1} = \left\{ \mathcal{V}_{N+1},\mathcal{E}_{N+1} \right\} $. If there are $ N $ vehicles in the control zone, we have vertex set $ \mathcal{V}_{N+1} = \{0,1,2,\dots,N\} $ (vertex $0$ stands for the virtual leading vehicle). Unidirectional edge set is defined as $ \mathcal{E}_{N+1}^{u}=\{(i,j) | i<j, i \in \mathcal{D}_{j}\} $, and bidirectional edge set is defined as $ \mathcal{E}_{N+1}^{b}=\{(i,j) | i<j, i \in \mathcal{C}_{j}\} $. Edge set is the union of these two sets as $ \mathcal{E}_{N+1} = \mathcal{E}_{N+1}^{u} \cup \mathcal{E}_{N+1}^{b} $.
\end{definition}

The CDG of the case scenario is drawn in Fig.~\ref{fig:ConflictDirectedGraph}. The vertices in CDG represent the vehicles in the control zone. The black unidirectional edges represent the diverging conflicts. If there exist a unidirectional edge $ (i,j) $, it means that vehicle $ j $ cannot reach the intersection earlier than vehicle $ i $. The red bidirectional edges stand for the crossing conflict, which means these vehicles' arriving sequences can be exchanged. Note that in DFST method, the crossing conflict $ \mathcal{C}_i $ and diverging conflict $ \mathcal{D}_i $ are treated as one union conflict type. In the following paragraph, we will interpret the separation of these different conflict sets helps to improve the original DFST algorithm performance.

It is straightforward that CDG describes all the conflict relationships of vehicles. Previous research has proved that CDG in DFST method has a depth-first spanning tree, as shown in Lemma~\ref{lemma:CDG}. Since this conclusion is drawn with only unidirectional edges in CDG, it still stands for our OPT-DFST method.

\begin{lemma}[\cite{xu2018distributed}]
	\label{lemma:CDG}
	A conflict directed graph has a depth-first spanning tree with the root node vertex 0.
\end{lemma}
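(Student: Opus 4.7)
The plan is to show that every vertex $j \in \{1,\dots,N\}$ of $\mathcal{G}_{N+1}$ is reachable from vertex $0$ using only unidirectional edges; once reachability is established, a standard depth-first search initiated at $0$ records one tree edge per newly discovered vertex and therefore produces the claimed spanning tree rooted at $0$. Because this argument uses only the edges in $\mathcal{E}_{N+1}^{u}$, the crossing (bidirectional) edges $\mathcal{E}_{N+1}^{b}$ play no role, which matches the paper's remark that the lemma transfers unchanged from the original DFST setting to OPT-DFST.

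First I would pin down the auxiliary modelling fact that $\mathcal{D}_j$ is never empty for $j \ge 1$. By the virtual-platoon convention stated just above the lemma, vertex $0$ is inserted into $\mathcal{D}_i$ whenever vehicle $i$ is currently closest to the intersection on its own lane, so each lane contributes at least one unidirectional edge leaving $0$. For any vehicle $j$ that is not frontmost, let $i$ be its immediate predecessor along the same lane; then $i<j$ (vehicles are indexed by arrival time at the control zone) and $i \in \mathcal{D}_j$ (same lane, $i$ directly ahead of $j$, so $j$ cannot reach the intersection before $i$). Hence every $j \ge 1$ has at least one incoming unidirectional edge from some strictly smaller index.

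Next I would run strong induction on $j$ to establish reachability from $0$. In the base case, whenever $0 \in \mathcal{D}_j$ the edge $(0,j)$ alone suffices. In the inductive step, pick any $i \in \mathcal{D}_j$ with $0 < i < j$; by the induction hypothesis a path of unidirectional edges from $0$ to $i$ already exists, and appending $(i,j)$ extends it to $j$. Once every vertex is reachable from $0$, applying DFS from vertex $0$ to $\mathcal{G}_{N+1}$ yields a spanning tree with root $0$ by the usual textbook argument.

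The main obstacle I anticipate is being precise about the non-emptiness claim for $\mathcal{D}_j$, since that fact lives in the physical modelling (lane structure together with the convention on vertex $0$) rather than in the graph-theoretic definition; if it were not true, a DAG-only CDG could have vertices unreachable from $0$ and the lemma would fail. Once this modelling step is isolated and stated cleanly, the rest of the proof is a routine induction plus the standard property that DFS from a universal source produces a spanning tree.
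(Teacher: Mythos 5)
Your proof is correct. Note that the paper itself offers no proof of this lemma; it is imported verbatim from the cited prior work, with only the remark that the argument uses unidirectional edges alone and therefore survives the move to OPT-DFST. Measured against that intended argument, your proposal hits the right point: the entire content of the lemma is that every vertex $j\ge 1$ is reachable from vertex $0$, and this reduces to the modelling fact that $\mathcal{D}_j\neq\varnothing$ for every real vehicle (either $0\in\mathcal{D}_j$ because $j$ is frontmost on its lane, or the immediate same-lane predecessor $i$ satisfies $i<j$ and $i\in\mathcal{D}_j$, with $i<j$ guaranteed by the no-lane-change, no-overtaking assumptions and indexing by arrival time). Your strong induction then gives reachability along $\mathcal{E}_{N+1}^{u}$, and any tree-growing procedure from the universal source $0$ (classical DFS, or the iterative parent-assignment of Algorithm~1, which may also exploit bidirectional edges) yields the spanning tree. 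You are also right to flag the nonemptiness of $\mathcal{D}_j$ as the one step that lives in the physical model rather than in the graph definition; that is exactly where the lemma could fail for an arbitrary directed graph, and isolating it is the correct way to make the citation-level claim self-contained.
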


It is evident that for a group of vehicles in the control zone, a passing order solution is related to the depth-first spanning tree addressed in Lemma~\ref{lemma:CDG}. The CDG describes all the conflict relationships of the vehicles and the edges specifically describe the conflict type. If the depth-first spanning tree is built according to CDG, a feasible passing order solution can be built. We propose the OPT-DFST method in Algorithm~\ref{algo:DFST:1} and~\ref{algo:DFST:2}.

\begin{algorithm}[tb]
	\caption{Optimized Depth-first Spanning Tree Method}
	\label{algo:DFST:1}
	\begin{algorithmic}[1]
		\Require{Conflict Directed Graph $ \mathcal{G}_{N+1} = \left\{ \mathcal{V}_{N+1},\mathcal{E}_{N+1} \right\} $} 
		\Ensure{Optimized Depth-first Spanning Tree  $ \mathcal{G}_{N+1}' = \left\{ \mathcal{V}_{N+1},\mathcal{E}_{N+1}' \right\} $}\\
		\textbf{initialize}: Set the depth of vertex $ 0 $'s layer $d_0=0$
		\For{$i = 1,2,...,N$}
		\State $ P = \{m \in \mathcal{V}_{N+1} | (m,i) \in \mathcal{E}_{N+1} \text{ and }m < i\} $ \label{algo:DFST:1:1}
		\State $k$ = FIND-OPT-PARENT($ \mathcal{G}_{N+1}' $,$P$) \label{algo:DFST:1:2}
		\State Set vertex $k$ as the parent vertex of $i$ in the graph $ \mathcal{G}_{N+1}' $, add a vertex $i$ and an edge $(k,i)$ to the graph $ \mathcal{G}_{N+1}' $, and set the depth of the vertex $i$ to $d_{k}+1$
		\EndFor
	\end{algorithmic} 
\end{algorithm}

\begin{algorithm}[tb]
	\caption{FIND-OPT-PARENT}
	\label{algo:DFST:2} 
	\begin{algorithmic}[1] 
		\Require{$ \mathcal{G}_{N+1}' $,$P$} 
		\Ensure{Optimal Parent Vertex $k$}
		\For{$j = 1,2,...,|P|$} 
		\State Find the largest depth $d_\text{div}$ of the diverging conflict parents in $ \mathcal{G}_{N+1}' $
		\State Find the union depths set of crossing conflict parents $D_\text{swap}$ in $ \mathcal{G}_{N+1}' $
		\EndFor
		\State Find $\min{d_k}$, s.t.$\{ k \in P |d_{k}+1 > d_\text{div}, \text{ and } (d_{k}+1) \cap D_\text{swap} = \varnothing\}$ \label{algo:DFST:2:1}
		\State \Return $k$
	\end{algorithmic} 
\end{algorithm}

We design an iterative algorithm to generate a depth-first spanning tree. Similar to the general graph theory, the depth of each node in the spanning tree is calculated by its distance to the root node $ 0 $. The vehicle's conflict-free attribute of the same depth has been proofed in Lemma~\ref{lemma:ConflictFree}. Specifically, the depth of the nodes represent the passing order of the vehicles, \ie, the vehicles in the same depth shall pass the intersection simultaneously. It can be inferred that the depth of the spanning tree is related to the traffic mobility at the intersection. \emph{Evacuation time} means the overall time cost for all the vehicles to pass the intersection, \ie, the leaving time of the last vehicle. It also means the largest depth of the spanning tree nodes, denoted as $ d_\text{all} $ in rest of the paper.

\begin{lemma}[\cite{xu2018distributed}]
	\label{lemma:ConflictFree}
	Consider a virtual platoon characterized by conflict directed graph $ \mathcal{G}_{N+1} $ with the spanning tree $ \mathcal{G}_{N+1}' $. The trajectories of two vehicles with the same depth in $ \mathcal{G}_{N+1}' $ have no conflict relationship with each other.
\end{lemma}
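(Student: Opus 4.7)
The plan is to prove the contrapositive directly from the construction in Algorithm~\ref{algo:DFST:1}. I would fix two distinct vertices $i<j$ joined by an edge $(i,j)\in\mathcal{E}_{N+1}$ and show that their depths in $\mathcal{G}_{N+1}'$ must differ; contraposition then yields Lemma~\ref{lemma:ConflictFree} immediately, since the only other possibility for two vertices of the CDG is the absence of an edge, i.e., no conflict.

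The key observation is that when Algorithm~\ref{algo:DFST:1} processes $j$, vertex $i$ is already in the tree with a fixed depth $d_i$, and $i$ belongs to the candidate set $P$ assembled on line~\ref{algo:DFST:1:1} because $(i,j)\in\mathcal{E}_{N+1}$ and $i<j$. I would then split on the edge type, which is unambiguous since $\mathcal{E}_{N+1}^{u}$ and $\mathcal{E}_{N+1}^{b}$ are disjoint by definition. If $(i,j)\in\mathcal{E}_{N+1}^{u}$, corresponding to $i\in\mathcal{D}_{j}$, the quantity $d_\text{div}$ computed inside FIND-OPT-PARENT satisfies $d_\text{div}\geq d_i$, so the returned parent $k$ obeys $d_j=d_k+1>d_\text{div}\geq d_i$. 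If $(i,j)\in\mathcal{E}_{N+1}^{b}$, corresponding to $i\in\mathcal{C}_{j}$, then $d_i\in D_\text{swap}$ at the selection step, and the rule $(d_k+1)\cap D_\text{swap}=\varnothing$ on line~\ref{algo:DFST:2:1} forbids $d_j=d_i$. In either case $d_i\neq d_j$, and we are done.

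The step I expect to require the most care is verifying that FIND-OPT-PARENT is in fact well defined, i.e., that $P$ always contains some $k$ simultaneously meeting the $d_\text{div}$ threshold and avoiding the finite set $D_\text{swap}$; otherwise $d_j$ is not pinned down and the dichotomy above would be vacuous. I would handle this by noting that any non-front-most vehicle already has a same-lane predecessor in $P$, and that inflating the candidate depth within $P$ eventually clears both constraints because $D_\text{swap}$ is finite while $d_\text{div}$ is a single bound; a short induction on the processing index $i=1,\ldots,N$ then propagates the invariant that every vertex receives a concrete depth. Once this existence check is in place, Lemma~\ref{lemma:CDG} confirms that the output is a genuine spanning tree so the hypothesis of Lemma~\ref{lemma:ConflictFree} is well posed, and the two-case argument above closes the proof.
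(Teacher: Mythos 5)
Your proof is correct, and it is worth noting that the paper itself supplies no proof of this lemma: it is imported from \cite{xu2018distributed}, where it is established for the original DFST rule (parent $=$ deepest vertex of $P$, so every conflicting predecessor automatically sits at strictly smaller depth). Because the present paper replaces that rule with FIND-OPT-PARENT, a re-verification for the modified construction is genuinely needed, and your contrapositive argument supplies exactly that: for any CDG edge $(i,j)$ with $i<j$, vertex $i$ is in $P$ when $j$ is processed, and the two selection constraints ($d_k+1>d_\text{div}\ge d_i$ for a diverging edge, $d_k+1\notin D_\text{swap}\ni d_i$ for a crossing edge) each force $d_j\neq d_i$; since the CDG records every pairwise conflict, same-depth vertices are non-adjacent and hence conflict-free. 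The one soft spot is your well-definedness argument for FIND-OPT-PARENT: the admissible new depths are restricted to $\{d_k+1 : k\in P\}$, so you cannot literally ``inflate the candidate depth'' until the constraints clear. The clean fix is to observe that the maximum-depth vertex $k^{\ast}$ of $P$ always qualifies, because $d_{k^{\ast}}+1$ strictly exceeds $d_m$ for every $m\in P$ and therefore exceeds $d_\text{div}$ and avoids every element of $D_\text{swap}$, both of which are drawn from depths of vertices in $P$; with that observation your induction on the processing index goes through. (If the lemma is also meant to apply to the MM-generated tree, a separate one-line argument is needed there: matched pairs are edges of $\overline{\mathcal{G}_{N}}$ and hence non-edges of $\mathcal{G}_{N+1}$.)
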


For each node $ i $, we firstly find all the parent vertices $P$ of it in $ \mathcal{G}_{N+1} $. Note that both original DFST method and OPT-DFST method are based on FIFO principle. Even though there are bidirectional edges in CDG, the larger-index vehicles should not be considered in smaller-index vehicles' passing order optimization process. Hence, only the smaller-index vehicles are selected as the parent node candidates, as shown in Line~\ref{algo:DFST:1:1} in Algorithm~\ref{algo:DFST:1}.

Then in Line~\ref{algo:DFST:1:2},~\ie, Algorithm~\ref{algo:DFST:2}, we treat conflict types differently. If the parent node $ j $ is in vehicle $ i $'s diverging conflict set, \ie, $ j \in \mathcal{D}_i $, it means $ j $ is the vehicle in front of $ j $, which also means $ i $ can not surpass $ j $ (Overtaking is not permitted). In this case, we find all the largest depth $ d_\text{div} $ of all the parent vehicles, because vehicle $ i $'s target depth should not surpass any of these vehicles. In other cases, the parent node $ j $ is in vehicle $ i $'s crossing conflict set $ \mathcal{C}_i $. It means vehicle $ i $ and vehicle $ j $ cannot arrive at the intersection simultaneously. Either vehicle $ i $ or vehicle $ j $ can pass the intersection first, so we find the union depth set of crossing conflict parents $ D_\text{swap} $. To find the optimal parent $ k $, we should select the proper depth of vehicle $ i $. Since the depth of parent $ k $ is $ d_k $, the depth of vehicle $ i $ is $ d_{k}+1 $. As mentioned before, $ d_{k}+1 $ should neither smaller than $ d_\text{div} $, nor have intersection with set $ D_\text{swap} $. For traffic efficiency consideration, the depth should be as small as possible, as shown in Line~\ref{algo:DFST:2:1} in Algorithm~\ref{algo:DFST:2}.

In Example~\ref{exp:1}, $ \mathcal{C}_{1} = \varnothing, \mathcal{D}_{1} = \{0\} $, $ d_\text{div} = 0, D_\text{swap}= \varnothing$, so the parent node of vehicle $ 1 $ is selected as node $ 0 $ and $ d_{1} = 1 $. Node $ 2 $ is arranged the same as node $ 1 $. $ \mathcal{C}_{3} = \{1,2\}, \mathcal{D}_{3} = \{0\} $, $ d_\text{div} = 0, D_\text{swap}= \{1\}$, so the parent node of vehicle $ 3 $ is selected as node $ 1 $ and $ d_{3} = 2 $. $ \mathcal{C}_{4} = \{2,3\}, \mathcal{D}_{4} = \{0\} $, $ d_\text{div} = 0, D_\text{swap}= \{1,2\}$, so the parent node of vehicle $ 4 $ is selected as node $ 3 $ and $ d_{4} = 3 $. $ \mathcal{C}_{5} = \{1,4\}, \mathcal{D}_{5} = \{0\} $, $ d_\text{div} = 0, D_\text{swap}= \{1,3\}$, so the parent node of vehicle $ 5 $ is selected as node $ 2 $ and $ d_{5} = 2 $. $ \mathcal{C}_{6} = \{1,4\}, \mathcal{D}_{5} = \{0,5\} $, $ d_\text{div} = 2, D_\text{swap}= \{1,3\}$, so the parent node of vehicle $ 5 $ is selected as node $ 0 $ and $ d_{5} = 4 $. Based on this method we can obtain the optimized depth-first spanning tree $ \mathcal{G}_{N+1}' $ as shown in Fig.~\ref{fig:SpanngTree_Optimized}.

\begin{figure}[tb]
	\centering
	\subcaptionbox{DFST Spanning Tree with $ d_\text{all} = 5 $\label{fig:SpanngTree_Old}}
	{\includegraphics[width=0.75\linewidth]{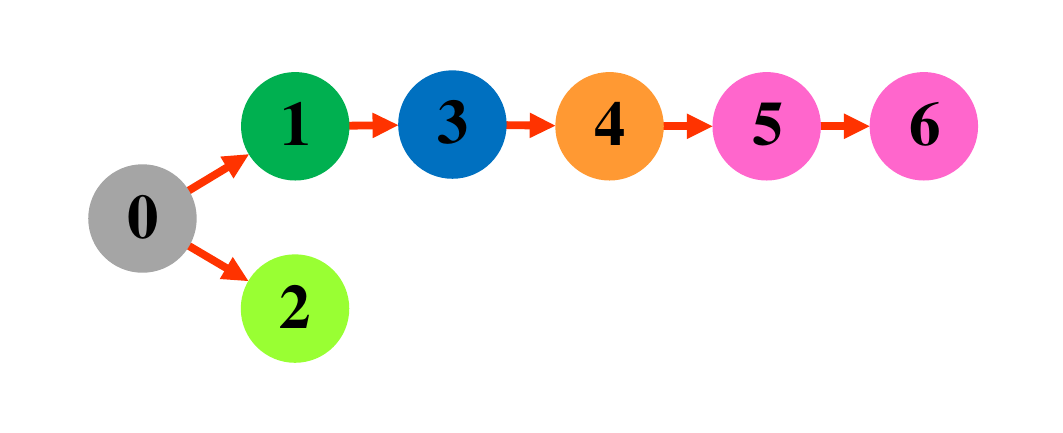}}
	\subcaptionbox{OPT-DFST Spanning Tree with $ d_\text{all} = 4 $ \label{fig:SpanngTree_Optimized}}
	{\includegraphics[width=0.65\linewidth]{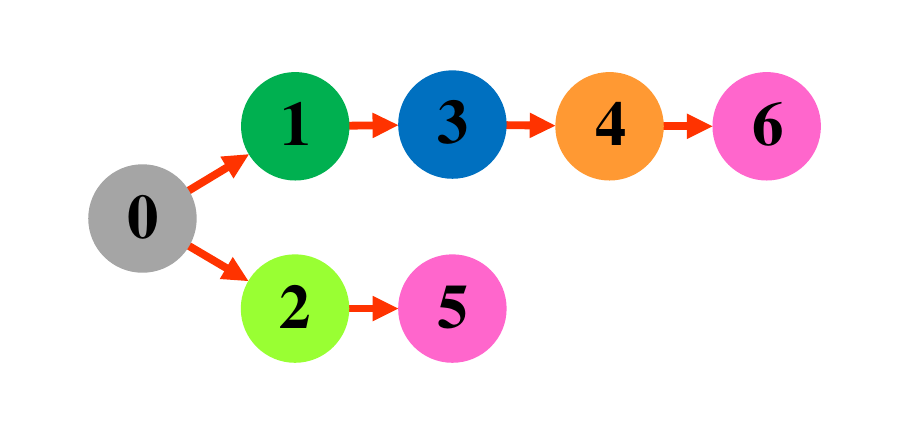}}
	\subcaptionbox{MM Spanning Tree with $ d_\text{all} = 3 $\label{fig:SpanngTree_MM}}
	{\includegraphics[width=0.54\linewidth]{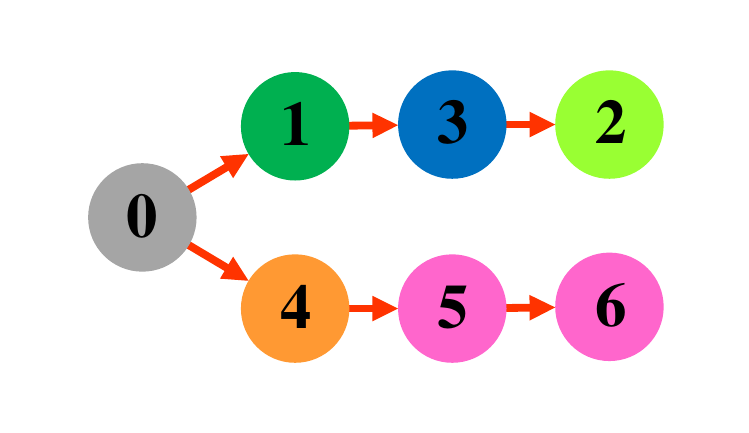}}
	\caption{Different spanning tree results comparison of three different method.}
	\label{fig:SpanningTree}
\end{figure}

The main improvement of OPT-DFST method is on Line~\ref{algo:DFST:1:2}, \ie, Algorithm~\ref{algo:DFST:2}. In \cite{xu2018distributed}, the diverging conflict and crossing conflict are treated as one union conflict set. In this way, the largest depth $ d_{k} $ of the parent vertices are found in Line~\ref{algo:DFST:1:2}. For instance, in our scenario case in Fig.~\ref{fig:SpanngTree_Old}, vehicle $ 5 $ is ranked $ d_{5} = 2 $ in OPT-DFST method. But in DFST method, the parent nodes of vehicle $ 5 $ are nodes $ \{0,1,4\} $, which cause the largest depth $ d_{k} $ of the parent vertices is $ d_{k} = 3 $, so DFST method gets $ d_{5} = 4 $ as shown in Fig.~\ref{fig:SpanngTree_Old}. As a result, the total depth of $ 6 $ vehicles to $ d_\text{all} = 5 $ while the OPT-DFST method optimized it into $ d_\text{all} = 4 $.

\begin{remark}
We come to a conclusion that the DFST method basically focus on the feasibility of the passing order problem. The proposed OPT-DFST method further considers the optimality,~\ie, tries to find the lowest depth for each vehicle, but still, the solution is found vehicle by vehicle,~\ie, based on FIFO principle. In other words, the solution is a local optimal solution in arranging each vehicle. Since each vehicle need to check all its conflict relationship of the parent nodes, the computational complexity is $ O(N^2) $.
\end{remark}

\subsection{Maximum Matching Method}
\label{Sec:MM}
As mentioned before, we aim to minimize the overall depth $ d_\text{all} $ of the spanning tree, which corresponds to the evacuation time of all the vehicles. If we re-consider the case scenario in Fig.~\ref{fig:ScenarioSimplify}, the right-turn traffic flows have no conflict with any other traffic flows. Considering the rest eight kinds of left-turn or straight traffic flows, a maximum number of two vehicles can pass the intersection simultaneously. From this point of view, another method to describe the conflict relationships of the vehicle is to describe coexisting relationships of them.

\begin{definition}[Coexisting Undirected Graph]
	The coexisting undirected graph is defined as the complement graph of the conflict directed graph $ \mathcal{G}_{N+1} $ excluding node $ 0 $. It denotes $ \overline{\mathcal{G}_{N}} = \{\overline{\mathcal{V}_{N}},\overline{\mathcal{E}_{N}}\} $, where $ \overline{\mathcal{V}_{N}} = \mathcal{V}_{N+1} - \{0\} $, $ \overline{\mathcal{E}_{N}} = \{(i,j) | i,j \in \overline{\mathcal{V}_{N}} \text{ and } (i,j) \notin \mathcal{E}_{N+1}\} $.
\end{definition}

In the case scenario, the conflict directed graph (CDG) is drawn in Fig.~\ref{fig:ConflictDirectedGraph} and coexisting undirected graph (CUG) in Fig.~\ref{fig:CoexistUndirectedGraph}. Since CDG edges $ \mathcal{E}_{N+1} $ means two vehicles have conflicts and CUG is the complement graph of CDG, CUG edges $ \overline{\mathcal{E}_{N}} $ means two vehicles are conflict-free, \ie, can pass the intersection simultaneously. 

Recall that we expect to minimize the overall depth $ d_\text{all} $ of the spanning tree. Since the total vehicle number, \ie, the node number $ \left| \overline{\mathcal{V}_{N}} \right|$ is a constant value, minimizing the overall depth of the spanning tree equals to widening the average width of the spanning tree. Thus, solving the optimal passing solution is equivalent to find the maximum pairings of coexisting vehicles combination in CUG. Note that this conclusion corresponds to common sense. Maximum pairings of vehicles mean maximizing the usage of the intersection, which lowers the overall evacuation time.

In graph theory, maximum matching (MM) is a well-developed theory that is capable of solving this problem~\cite{edmonds1965maximum}. Firstly, we define matching in our case formally.

\begin{definition}[Matching]
	A matching $ \mathcal{M} $ in $ \overline{\mathcal{G}_{N}} $ is a subset of its edge $ \overline{\mathcal{E}_{N}} $, such that no vertex in $ \overline{\mathcal{V}_{N}} $ is incident to more that one edge in $ \mathcal{M} $.
\end{definition}

The matching $ \mathcal{M} $ is the aforementioned pairing set. Since the edges are selected from CUG $ \overline{\mathcal{G}_{N}} $, the two vehicles are conflict-free in passing the intersection simultaneously. Because no two edges in $ \mathcal{M} $ have the same vertex, each vehicle only appears once in the matching. In Example~\ref{exp:1}, $ \mathcal{M} = \{(2,6),(3,5)\} $ is a matching, but it is still not a maximal matching because $ (1,4) $ can be add into the matching. $ \mathcal{M} = \{(1,4),(2,6),(3,5)\} $ is a maximal matching, but still we cannot claim that it is the maximum number of pairing in CUG. Our target is to find the maximum number of pairing in CUG, \ie, the maximum number of pairing, so we further introduce maximum matching.

\begin{definition}[Maximum Matching]
	\label{Def:MM}
	A matching $ \mathcal{M} $ is said to be maximum if for any other matching $ \mathcal{M}' $, $ \left|\mathcal{M}\right| > \left|\mathcal{M}'\right|$. The maximum matching is denoted as $ \mathcal{M}_{\max} $ in rest of the paper.
\end{definition}

Maximum matching $ \mathcal{M}_{\max} $ is the maximum sized matching in CUG $ \overline{\mathcal{G}_{N}} $, which corresponds to find the maximum number of vehicle pairing. In the case scenario, the maximum matching $ \mathcal{M}_{\max} = \{(1,4),(2,6),(3,5)\} $ with size of $ 3 $. The maximum matching have been solved in time $ O(\left|\overline{\mathcal{E}_{N}}\right| \left|\overline{\mathcal{V}_{N}}\right|^2) $ using Edmonds' blossom algorithm~\cite{edmonds1965maximum}.

After maximum matching, we can add the rest single vehicles in the end, forming the optimized feasible spanning tree which is the same as shown in Fig.~\ref{fig:SpanngTree_Optimized}. The process which completes the spanning tree is introduced as follows, denoting as SPANNING process in Line~\ref{algo:MM:2}. CUG $ \overline{\mathcal{G}_{N}} $ only contains the coexisting information of the vehicles, while the vehicle's passing order from the same lane should be strictly according to their relative position. The MM method could generate solutions that cannot be directly executed, \eg, $ \mathcal{M}_{\max} = \{(1,4),(3,6),(2,5)\} $ with size of $ 3 $. The spanning tree forms as $ (1,4) \rightarrow (3,6) \rightarrow (2,5)$. However, this solution is not feasible since vehicle $ 5 $ is in front of vehicle $ 6 $. We proof that this can be solved by exchange the unfeasible sequence as shown in Theorem~\ref{The:1}.

\begin{theorem}
	\label{The:1}
	If the maximum matching $ \mathcal{M}_{\max} $ contains $ \{(i,j), (n,m)\} $ that is not feasible, $ \{(i,m), (j,n) \}$ must be a feasible solution.
\end{theorem}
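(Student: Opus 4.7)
The plan is to proceed by a structural case analysis on the source of the FIFO violation that renders $\{(i,j),(n,m)\}$ infeasible. Since both $(i,j)$ and $(n,m)$ belong to $\mathcal{M}_{\max} \subseteq \overline{\mathcal{E}_N}$, each pair is an edge of the coexisting undirected graph, so within each pair the two vehicles lie on distinct incoming lanes (no diverging conflict) and have non-crossing trajectories (no crossing conflict). Infeasibility of the ordering in the spanning tree (with $(i,j)$ at the earlier depth, WLOG) therefore cannot come from a within-pair constraint and must be a cross-pair same-lane FIFO violation: there exist $a \in \{i,j\}$ and $b \in \{n,m\}$ sharing an incoming lane with $b$ in front of $a$ (so $b<a$ under the arrival-index convention), yet $b$ is scheduled at a later depth than $a$.

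Relabeling if necessary, I would arrange the offender to be $(a,b) = (j,m)$, i.e., $j$ and $m$ share a lane with $m$ in front of $j$. This is the canonical case: the other combinatorial arrangements of $(a,b) \in \{(i,n),(i,m),(j,n),(j,m)\}$ either reduce to this one by interchanging $i \leftrightarrow j$ or $n \leftrightarrow m$ within a pair, or are precluded because they would make one of the swap targets $(i,m),(j,n)$ itself a diverging conflict and therefore not a CUG edge. With $(a,b) = (j,m)$ fixed, I would verify two properties of the swap $\{(i,m),(j,n)\}$. First, the swapped edges are coexisting pairs: $\mathrm{lane}(i) \neq \mathrm{lane}(j) = \mathrm{lane}(m)$ rules out any diverging conflict between $i$ and $m$, and an analogous computation using the coexistence of $(n,m)$ and $\mathrm{lane}(j) = \mathrm{lane}(m)$ rules out a diverging conflict between $j$ and $n$; the crossing conflict checks follow from the intersection's conflict-point structure once $j$ and $m$ are identified as same-lane. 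Second, placing $(i,m)$ at the earlier depth and $(j,n)$ at the later depth puts $m$ strictly before $j$ in the spanning tree, reversing the unique FIFO violation without introducing new ones, since all other same-lane relations among $\{i,j,n,m\}$ were already respected by the original depth assignment.

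The hard part is certifying the crossing conflict checks in the first verification. While it is intuitively clear from Fig.~\ref{fig:Intersection} that substituting a same-lane vehicle into a coexisting pair preserves non-crossing, this is a geometric fact about the four-way intersection's trajectory structure rather than a purely combinatorial consequence of the CUG/CDG definitions, because $j$ and $m$ can share an entry lane while pursuing different destinations and therefore tracing different in-intersection trajectories. A rigorous treatment will likely require either a tabulated case enumeration over lane and destination combinations for $i,j,n,m$ against the conflict-point diagram, or identifying a structural invariant that forces the crossings of any coexisting pair to transfer to any same-lane substitute of one of its vehicles.
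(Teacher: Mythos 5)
Your decomposition is the same one the paper uses: reduce by relabeling to the case where $j$ and $m$ share a lane with $m$ in front of $j$ (equivalently, a unidirectional edge $(m,j)$ in the CDG), observe that $i,m$ (resp.\ $j,n$) cannot be same-lane because that would force $i,j$ (resp.\ $n,m$) onto a common lane and contradict their being CUG edges, and conclude that the partner swap reverses the offending precedence. So the combinatorial skeleton is right and matches the paper's proof.

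The step you flag as ``the hard part'' --- certifying that $(i,m)$ and $(j,n)$ are still crossing-conflict-free --- is not actually hard in this paper's setting, and the structural invariant you hypothesized is exactly the right one: in Section~\ref{Sec:Scenario} the incoming vehicles are assigned to lanes \emph{by destination} and lane changes are forbidden inside $R_c$, so two vehicles on the same incoming lane trace the identical path through the intersection and therefore have identical crossing-conflict sets (visible in Example~\ref{exp:1}, where $\mathcal{C}_5=\mathcal{C}_6=\{1,4\}$ for the two same-lane vehicles). Hence $j$ and $m$ same-lane gives $\mathcal{C}$-equivalence of $j$ and $m$, so $i$ coexisting with $j$ transfers to $i$ coexisting with $m$, and $n$ coexisting with $m$ transfers to $n$ coexisting with $j$; no case enumeration over the conflict-point diagram is needed. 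You should state this as a one-line lemma rather than leave it open --- notably, the paper's own proof skips this verification entirely, so your write-up is the more careful of the two once the lemma is inserted. One residual caveat applies equally to you and to the paper: both arguments tacitly assume the FIFO violation between the two pairs is unique. If simultaneously $m$ is in front of $j$ and $n$ is in front of $i$ (which the CUG constraints do not exclude, since $\mathrm{lane}(i)=\mathrm{lane}(n)\neq\mathrm{lane}(j)=\mathrm{lane}(m)$ is consistent with all four matched edges), the prescribed swap $\{(i,m),(j,n)\}$ admits no consistent ordering of its two pairs, and the correct repair is instead to keep the original pairing and exchange the depths of $(i,j)$ and $(n,m)$. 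Adding that case would make the theorem airtight.
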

\begin{proof}
	Without loss of generality, we assume in the maximum matching solution, $ m $ and $ j $ are in the same lane with $ m $ in front of $ j $, causing the unfeasibility. According to the definition of CDG, there exist a unidirectional edge $ (m,j) $ in CDG. If $ (i,m), (j,n) $ is still not a feasible solution, the unfeasibility can only be caused by two possibilities. The first case is that $ (i,j) $ or $ (n,m) $ is in the same lane which is contradicted to the maximum matching solution. Second case is that $ (m,j) $ are in the same lane with $ j $ in front of $ m $, which means there exist a unidirectional edge $ (j,m) $ in CDG and it is not possible. As a result,  $ (i,m), (j,n) $ must be a feasible solution.
\end{proof}

If the original MM solution is $ \mathcal{M}_{\max} = \{(1,4),(3,6),(2,5)\} $, we can exchange vehicle $ 5 $ and $ 6 $, obtaining the feasible solution $ \mathcal{M}_{\max} = \{(1,4),(3,5),(2,6)\} $. 

\begin{remark}
	We sum up the MM method in Algorithm~\ref{algo:MM}. Recall that in the process of solving the passing order, the vehicles' arrival sequence is neglected on purpose. Therefore, the FIFO principle is abandoned and the fundamental vehicle conflict relationship is highlighted, which refers that MM method obtains the global optimal solution as indicating by Definition~\ref{Def:MM}. Since the maximum matching in Line~\ref{algo:MM:1} takes $ O(\left|\overline{\mathcal{E}_{N}}\right| \left|\overline{\mathcal{V}_{N}}\right|^2) $ time and the adjusting and spanning in Line~\ref{algo:MM:2} takes $ O(\overline{\mathcal{V}_{N}}) $, the overall MM method takes $ O(\left|\overline{\mathcal{E}_{N}}\right| \left|\overline{\mathcal{V}_{N}}\right|^2 + \overline{\mathcal{V}_{N}}) = O(N^4) $ time.
\end{remark}

\begin{algorithm}[tb]
	\caption{Maximum Matching Method}
	\label{algo:MM} 
	\begin{algorithmic}[1] 
		\Require{Coexisting Undirected Graph $\overline{\mathcal{G}_{N}}$} 
		\Ensure{Spanning Tree $ \mathcal{G}_{N+1}' $}
		\State $ \mathcal{M}_{\max} $ = MAXIMUM-MATCHING($ \mathcal{G}_{N+1}' $) \label{algo:MM:1} 
		\State $ \mathcal{G}_{N+1}' $ = SPANNING($ \mathcal{M} $) \label{algo:MM:2}
		\State \Return $ \mathcal{G}_{N+1}' $
	\end{algorithmic} 
\end{algorithm}

\subsection{Virtual Platooning Topology}
As mentioned before, all the three method,~\ie, DFST, OPT-DFST and MM, lead to a feasible spanning tree $ \mathcal{G}_{N+1}' = \left\{ \mathcal{V}_{N+1},\mathcal{E}_{N+1}' \right\} $ as shown in Fig.~\ref{fig:SpanngTree_Optimized}, which forms the geometry topology of the virtual platoon. Since each node $ \mathcal{V}_{i} $ in $ \mathcal{G}_{N+1}' $ represents a vehicle $ i $ and the edge $ \mathcal{E}_{i}' = (j,i) $ stands for the virtual preceding vehicle $ j $, the vehicle can always follow its parent node vehicle $ \mathcal{V}_{j} $. In other words, vehicle $ i $ and its parent vehicle $ j $ intend to keep a constant desired car-following distance $ D_{des} $ and the same velocity $ v_{des} $.

\begin{equation}
	\left\{
		\begin{array}{l}
			\displaystyle\lim_{t \rightarrow \infty}\left\|v_{i}(t)-v_{j}(t)\right\|=0 \\
			\displaystyle\lim_{t \rightarrow \infty}\left(p_{j}(t)-p_{i}(t)-D_{des}\right)=0
		\end{array}, i \in\{1,2, \cdots, N\},  \right.
\end{equation}
in which $ v_{i}(t) $ and $ p_{j}(t) $ denote the velocity and position of vehicle $ i $. Lemma~\ref{lemma:ConflictFree} has proven that the vehicles in the same depth have no conflict relationship. Therefore, if vehicle $ i $ and vehicle $ j $ are of the same depth, it stands

\begin{equation}
	\displaystyle\lim_{t \rightarrow \infty}\left(p_{i}(t)-p_{j}(t)\right)=0.
\end{equation}

The state of vehicle $ i $ is defined as $ x_{i} = \left[p_{i};v_{i}\right] $ and second-order model is employed to describe the vehicle dynamics. As for the low-level vehicle control, a linear feedback PID controller is designed to execute the virtual platooning~\cite{peppard1974string}. 

\section{SIMULATIONS}
\label{Sec:Simulation}
\subsection{Simulation Environment}
The traffic simulation is conducted in SUMO, where we use the default electric vehicle parameters. The intersection scenario and the lane direction settings are the same as shown in Fig.~\ref{fig:Intersection}, and the control zone length $ R_c = 1000 m $. The vehicle arrival is assumed to be binomially distributed flow with probability $ p $.

\subsection{Simulation under Different Vehicle Number}
\begin{figure}[tb]
	\centering
	\subcaptionbox{Maximum depth of the spanning tree of three methods.\label{fig:VehiceNum:1}}
	{\includegraphics[width=0.87\linewidth]{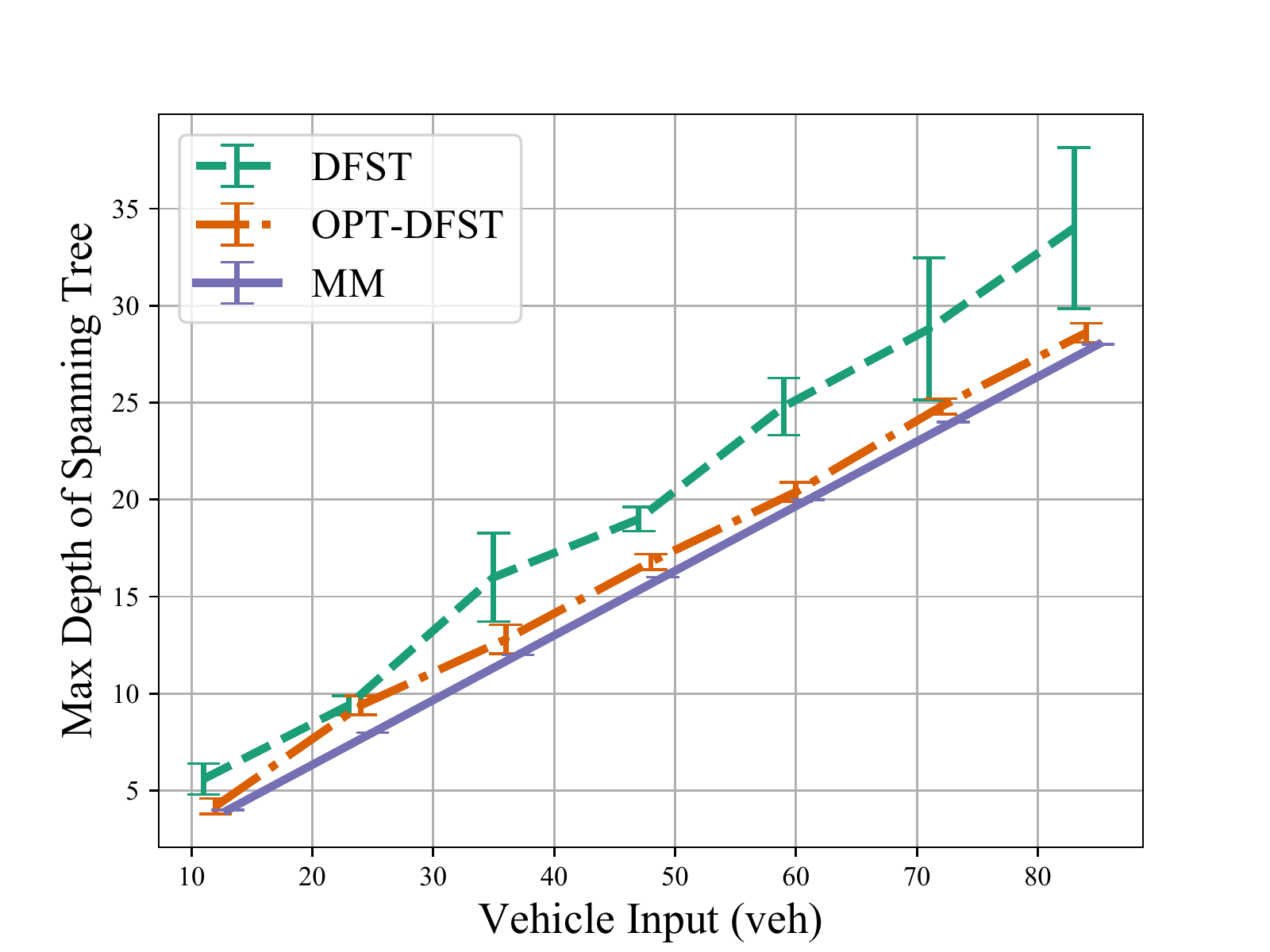}}
	\subcaptionbox{Evacuation time comparison.\label{fig:VehiceNum:2}}
	{\includegraphics[width=0.87\linewidth]{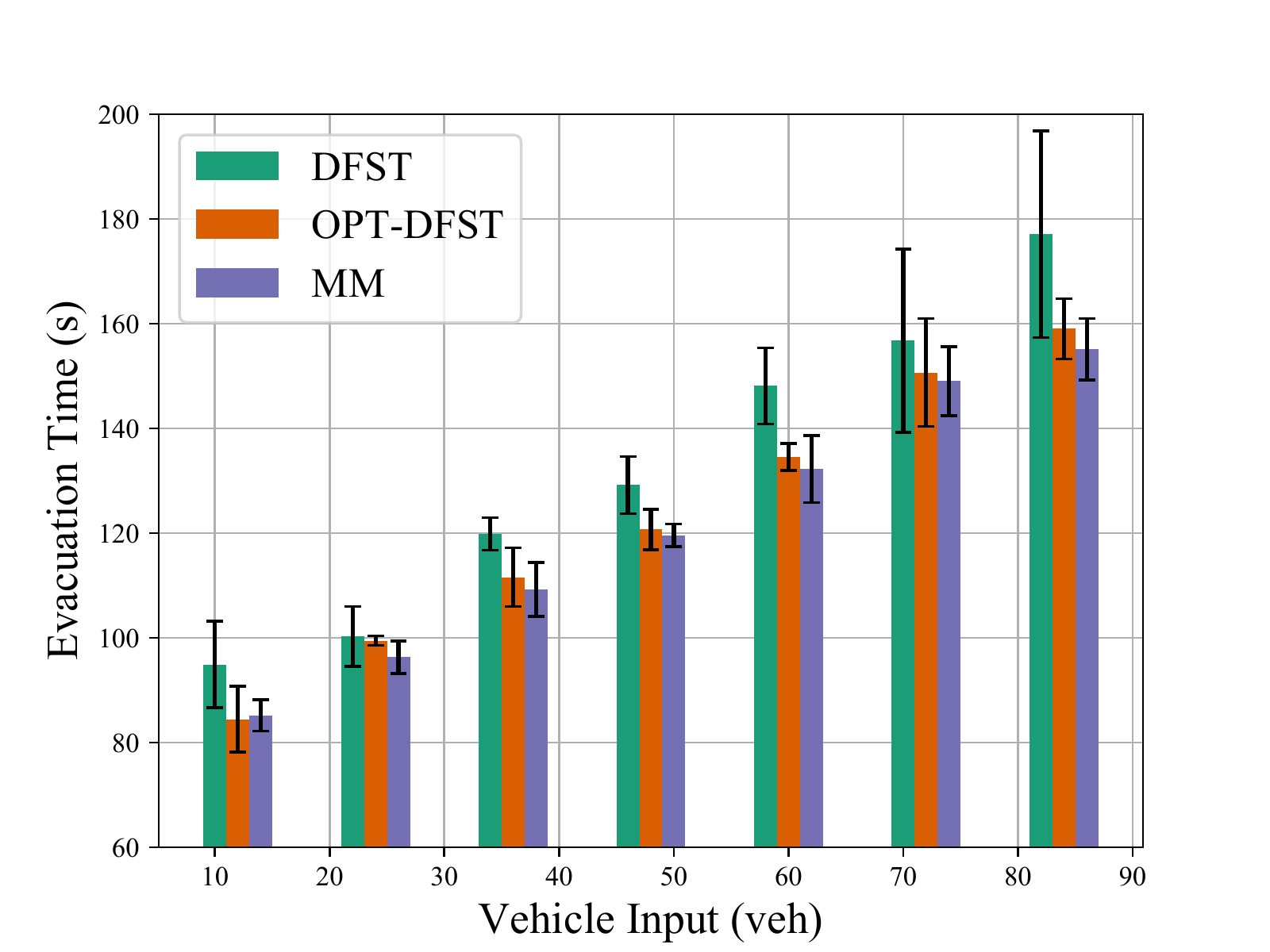}}
	\caption{Simulation results of different vehicle input number.}
	\label{fig:VehiceNum}
\end{figure}

Since obtaining the optimal passing order solution is the critical point of this research, the first simulation is conducted with different vehicle numbers $ N $. The vehicle arrival probability is $ p = 0.3 $, and the maximum vehicles input is $ N = 84 $. Generally speaking, there are maximum $ 12^{72}\cdot12! $ possible passing order solutions in $ 12 $ lanes scenario.

Fig.~\ref{fig:VehiceNum} demonstrates the performance of different algorithms. Each vehicle input consists of five-time simulations, where input flow is randomly distributed according to $ p $. Standard deviations of the result are also presented as error bar in Fig.~\ref{fig:VehiceNum}. Firstly, the OPT-DFST method outperforms the original DFST method. By separating the conflict types as explained in Section~\ref{Sec:OPT-DFST}, OPT-DFST is capable of finding the local optimal passing order solution,~\ie, smaller depth of the spanning tree for each vehicle. Although both of these two methods are based on the FIFO principle, the OPT-DFST method obtains a better passing order solution considering the total evacuation time. However, because of the FIFO principle, OPT-DFST only searches for the local optimal solution as each new vehicle comes. In contrast, the MM method is capable of searching the global optimal solution of all passing order possibilities by discarding the arrival sequence of the existing vehicles. Besides, the error bars show that the MM method has the best performance in standard deviations,~\ie, finding the global optimal solution especially when the vehicle input number is rather large. As a result, in the situation of $ 84 $ input vehicles, OPT-DFST saves $ 10.2\% $ evacuation time than that of DFST, and MM further saves $ 2.2\% $ of it.

\section{CONCLUSIONS}
\label{Sec:Conclusion}
This paper presents a graph-based vehicle cooperation method to guarantee safety and efficiency at unsignalized intersections. Base on graphic analysis, conflict directed graphs and coexisting directed graphs are drawn to model the conflict relationship of the vehicles. Based on the existing research, an optimized depth-first spanning tree method is proposed, which proves to be the local optimal passing order solution. The maximum matching method is further developed to find the global optimal solution. The computational complexity of both methods is provided and traffic simulation verified the effectiveness of the proposed methods.




%

%


\bibliographystyle{IEEEtran}
\bibliography{IEEEabrv,mybibfile}
	
\end{document}